\newtheorem{theorem}{Theorem}[section]
\newtheorem{lemma}[theorem]{Lemma}
\newtheorem{prop}{Proposition}
\theoremstyle{definition}
\newtheorem{definition}[theorem]{Definition}
\newtheorem{rk}{Remark}
\newtheorem{example}{Example}
\newcommand{\N}{\mbox{$\mathbb{N}$}}
\numberwithin{equation}{section}
\begin{document}

\title[ Sensitive actions in non-compact spaces. ]{Sensitive actions in non-compact spaces. }

\author[Jorge Iglesias ]{}


%


\maketitle

\centerline{\scshape  Jorge Iglesias $^*$  and Aldo Portela$^*$}
\medskip
{\footnotesize
 \centerline{Universidad de La Rep\'ublica. Facultad de Ingenieria. IMERL}
   \centerline{ Julio Herrera y Reissig 565. C.P. 11300}
   \centerline{ Montevideo, Uruguay}}

\bigskip


\begin{abstract}
Devaney defines a function as chaotic if it satisfies the following three conditions: transitivity, having a dense set of periodic points, and sensitive dependence on initial conditions. In \cite{3}, it was demonstrated that the first two conditions imply the third. This result was generalized in \cite{aak} by replacing the density of periodic points with the density of minimal points. The result was further generalized in \cite{g} for group actions, in \cite{km} for $C$-semigroups actions, and in \cite{d} for a continuous semi-flow with $X$ being a Polish space. Subsequently, in \cite{ip1} and \cite{ip2}, it was generalized for compact spaces and for non-compact spaces in \cite{z}. The objective of this work is to generalize the result in \cite{z}, providing a simple proof.
\end{abstract}

\section{Introduction.}

In the study of dynamical systems, the concept of chaos plays a pivotal role in understanding complex and unpredictable behaviors. A seminal contribution to this field was made by Devaney in \cite{de}, who proposed a formal definition of a chaotic map. This definition has since become foundational in the analysis of chaotic functions.

\begin{definition}
\label{def1}
Let \( X \) be a metric space. A continuous function \( f: X \to X \) is called chaotic if it satisfies the following three conditions:
\begin{enumerate}
    \item \( f \) is topologically transitive.
    \item The set of periodic points of \( f \) is dense.
    \item \( f \) exhibits sensitive dependence on initial conditions.
\end{enumerate}
\end{definition}

A function \( f \) is said to be topologically transitive if for every pair of non-empty open sets \( U \) and \( V \) in \( X \), there exists an \( n \in \mathbb{N} \) such that \( f^n(U) \cap V \neq \emptyset \).

A point \( p \in X \) is called a fixed point if \( f(p) = p \). A point \( p \in X \) is called periodic if there exists a \( k \in \mathbb{N} \) (\( k \geq 1 \)) such that \( f^k(p) = p \). The period of \( p \) is defined as \( \min\{ k \geq 1: f^k(p) = p\} \).

We say that \( f \) is sensitive if there exists an \( \alpha > 0 \) such that for every open set \( U \) of \( X \) and for every \( x \in U \), there exist \( y \in U \) and \( k \in \mathbb{N} \) such that \( d(f^k(x), f^k(y)) > \alpha \). The number \( \alpha \) is called a sensitivity constant.

In \cite{3}, the following significant result was proven:

\begin{prop}
\label{prop_principal}
If \( f: X \to X \) is continuous, topologically transitive, and the set of periodic points of \( f \) is dense, then \( f \) is sensitive.
\end{prop}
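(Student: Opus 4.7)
The plan is to adapt the classical Banks--Brooks--Cairns--Davis--Stacey argument. The natural strategy is to fix a sensitivity constant $\alpha$ directly from the geometry of two disjoint periodic orbits, then use density of periodic points to pick an approximating periodic $p$ inside a given open set, and finally use transitivity to produce a second point $y$ whose iterate lands near a far-away periodic orbit; the separation will then come from a triangle inequality.

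First I would produce the constant. Assuming $X$ is not reduced to a single periodic orbit (otherwise $f$ is a cyclic permutation of finitely many points and the statement is vacuous or trivial), density of periodic points gives two periodic points $p_1,p_2$ whose orbits $O_1,O_2$ are disjoint. Set
\[
 8\alpha \;=\; \min_{u\in O_1,\, v\in O_2} d(u,v) \;>\; 0 .
\]
For any $x\in X$, at least one of the two orbits has all of its points at distance $\geq 4\alpha$ from $x$; call such an orbit $O(x)$ and a representative $q=q(x)$ of period $N$.

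Next, fix $x\in X$ and an open $U\ni x$. Shrinking $U$, assume $U\subseteq B(x,\alpha)$. By density of periodic points there is a periodic $p\in U$ of some period $n$. Define the open set
\[
 V \;=\; \bigcap_{i=0}^{N-1} f^{-i}\bigl(B(f^i(q),\alpha)\bigr),
\]
which contains $q$. By topological transitivity there exist $y\in U$ and $k\in\mathbb{N}$ with $f^k(y)\in V$. Choose $j$ with $jn\geq k$ and $jn-k\in\{0,1,\dots,N-1\}$; then $f^{jn}(p)=p$ and $f^{jn}(y)\in B(f^{jn-k}(q),\alpha)$. A triangle inequality gives
\[
 d(f^{jn}(p),f^{jn}(y)) \;\geq\; d(x,f^{jn-k}(q)) - d(x,p) - d(f^{jn}(y),f^{jn-k}(q)) \;\geq\; 4\alpha-\alpha-\alpha \;=\; 2\alpha .
\]
Finally, since $d(f^{jn}(p),f^{jn}(y))>2\alpha$, either $d(f^{jn}(x),f^{jn}(p))>\alpha$ or $d(f^{jn}(x),f^{jn}(y))>\alpha$, and in either case $U$ contains a point whose $jn$-th iterate is more than $\alpha$ from $f^{jn}(x)$. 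This proves sensitivity with constant $\alpha$.

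The main obstacle I anticipate is the bookkeeping that produces the ``escape time'' $jn$: one has to arrange that the common time at which $p$ returns to itself also places $y$ inside a controlled neighbourhood of \emph{some} iterate of $q$, which is exactly why $V$ is defined as the intersection over a full period of $q$. Choosing the correct order of quantifiers---first $\alpha$, then $p$ and its period, then $V$, then $y$ via transitivity, and only afterwards the time $jn$---is the delicate part; the rest of the argument is triangle inequalities. Continuity of $f$ is used only implicitly, through the fact that the sets $f^{-i}(B(f^i(q),\alpha))$ are open.
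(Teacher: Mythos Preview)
The paper does not supply its own proof of this proposition; it is quoted from \cite{3} as background, and the paper's contributions are the semigroup--action generalisations (Theorems~\ref{theo0}--\ref{theo2}). Your proposal is exactly the Banks--Brooks--Cairns--Davis--Stacey argument, so in spirit it matches what the paper is citing.

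There is, however, a real slip at precisely the step you flagged as delicate. You set $V=\bigcap_{i=0}^{N-1} f^{-i}\bigl(B(f^i(q),\alpha)\bigr)$ with $N$ the period of $q$, and then ask for $j$ with $jn\ge k$ and $jn-k\in\{0,\dots,N-1\}$. Such a $j$ need not exist when $N<n$: for instance $n=10$, $N=3$, $k=1$ leaves no multiple of $10$ in $\{1,2,3\}$, so the argument breaks down. The remedy (and this is how \cite{3} actually proceeds) is to let the intersection defining $V$ run over $i=0,\dots,n$, where $n$ is the period of the approximating periodic point $p$; this is legitimate because $p$, and hence $n$, is already fixed when you form $V$. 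Taking $j=\lfloor k/n\rfloor+1$ then forces $jn-k\in\{1,\dots,n\}$, and your triangle--inequality computation goes through verbatim. The set $V$ still contains $q$ however large $n$ is, so nothing else changes.

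For contrast, the paper's proofs of its own theorems argue quite differently: assuming non-sensitivity, they extract open sets whose entire $\mathcal{S}$-image has arbitrarily small diameter (Remark~\ref{rk1}) and combine this with a transitive point and a nearby minimal (or closed-orbit) point to show that \emph{every} orbit is dense, contradicting non-minimality. That route sidesteps the two-orbit geometry and the period-synchronisation bookkeeping entirely, at the price of requiring local compactness of $X$.
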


Subsequent research has focused on generalizing this result. For instance, in \cite{aak}, the density of periodic points was replaced with the density of minimal points. A minimal set \( M \subset X \) is a non-empty set such that \( \overline{O(y)} = M \) for every \( y \in M \). A point \( x \in X \) is minimal if the set \( \overline{O(x)} \) is minimal. We denote the set of minimal points by \( \mathcal{M} \). The function \( f \) is called minimal if \( \overline{O(x)} = X \) for every \( x \in X \).

An illustrative example with a dense set of minimal sets but without periodic points is the function \( f: S^1 \times S^1 \to S^1 \times S^1 \) defined by \( f(x, y) = (x^2, R_I(y)) \), where \( R_I \) is an irrational rotation.

So far, we have considered the dynamical system generated by a function. This concept can be generalized as follows:

The dynamical system that we will consider is formally defined as a triplet \((\mathcal{S}, X, \Phi)\), where \(\mathcal{S}\) is a topological semigroup and \(\Phi: \mathcal{S} \times X \to X\) is a continuous function with \(\Phi(s_1, \Phi(s_2, x)) = \Phi(s_1 s_2, x)\) for all \(s_1, s_2 \in \mathcal{S}\) and for all \(x \in X\). Here, \(X\) is a metric space, which is infinite and has no isolated points.

The map \(\Phi\) is called an action of \(\mathcal{S}\) on \(X\). We will denote \(\Phi(s, x)\) by \(\Phi_s(x)\).

For any \(x \in X\), we define the orbit of \(x\) as \(O(x) = \{\Phi_s(x): s \in \mathcal{S}\}\). Sometimes, the set \(O(x)\) is denoted by \(\mathcal{S}x\). A non-empty set \(Y \subset X\) is minimal if \(\overline{O(y)} = Y\) for any \(y \in Y\).

A point \(x \in X\) is minimal if the set \(\overline{O(x)}\) is a minimal set. We denote by \(\mathcal{M}\) the set of minimal points. We say that the dynamical system \((\mathcal{S}, X, \Phi)\) is minimal if there exists a minimal point \(x \in X\) such that \(\overline{O(x)} = X\).

The dynamical system \((\mathcal{S}, X, \Phi)\) is point transitive (PT) if there exists \(x \in X\) such that \(\overline{O(x)} = X\). It is topologically transitive (TT) if, given two open and non-empty sets \(U, V \subset X\), there exists \(s \in \mathcal{S}\) such that \(\Phi_s(U) \cap V \neq \emptyset\). It is densely point transitive (DPT) if there exists a dense set \(Y \subset X\) of transitive points. Denote by \(Trans(X)\) the set of transitive points and by \(Trans(X)^*\) the set \(\{x \in Trans(X): \Phi_s(x) \in Trans(X), \forall s \in \mathcal{S}\}\).

If \(X\) is a Polish space (i.e., separable completely metrizable topological space) or \(X\) is a locally compact metric space with a countable base, then TT implies DPT.

We say that a dynamical system \((\mathcal{S}, X, \Phi)\) is sensitive if there exists \(\alpha > 0\) such that for all \(x \in X\) and for all \(\delta > 0\), there exists \(y \in B(x, \delta)\) and \(s \in \mathcal{S}\) such that \(d(\Phi_s(x), \Phi_s(y)) > \alpha\).

 We denote by $Ss_0=\{ss_0: \ s\in S \}$. A \(C\)-semigroup \(S\) is a semigroup such that \(S \setminus Ss_0\) is relatively compact. A point \(x\) is called almost periodic if $O(x)$ is minimal and compact. If the set of almost periodic points is dense in \(X\), we say that \((\mathcal{S}, X, \Phi)\) satisfies the Bronstein condition. If, in addition, the system \((\mathcal{S}, X, \Phi)\) is topologically transitive (TT), we say that it is an \(M\)-system.

In \cite[Theorem 5.5]{km}, the following result was proven:

\begin{theorem}
\label{thm:5.7}
Let \((\mathcal{S}, X, \Phi)\) be a dynamical system, where \((X, d)\) is a Polish \(S\)-system and \(S\) is a \(C\)-semigroup. If \((\mathcal{S}, X, \Phi)\) is an \(M\)-system which is not minimal or not equicontinuous, then \((\mathcal{S}, X, \Phi)\) is sensitive.
\end{theorem}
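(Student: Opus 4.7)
The plan is to split into the two cases dictated by the disjunction in the hypothesis and, in each case, to exhibit a single uniform sensitivity constant $\alpha>0$ that works for every ball.

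\textbf{Case 1: the system is not minimal.} The first step is to extract from the Bronstein condition two disjoint compact minimal sets. By the paper's definition, every almost periodic point $p$ has $O(p)$ compact and minimal; if all these orbits coincided with one set $K$, density of almost periodic points would give $X\subseteq\overline{K}=K$, making the system minimal, a contradiction. So pick two distinct (hence disjoint) compact minimal orbits $K_1,K_2$ and set $8\alpha=d(K_1,K_2)>0$. Given $x\in X$ and $\delta>0$, Bronstein provides an almost periodic $p\in B(x,\delta)$ with minimal orbit $K_p$; the triangle inequality $d(K_1,K_p)+d(K_p,K_2)\ge 8\alpha$ forces at least one of $K_1,K_2$ --- call it $K$ --- to satisfy $d(K_p,K)\ge 4\alpha$. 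Since $X$ is Polish and the action is TT, it is DPT (as stated in the excerpt), so $B(x,\delta)$ also contains a transitive point $y$. Density of $O(y)$ gives $s\in\mathcal{S}$ with $\Phi_s(y)\in B(K,\alpha)$, while $\Phi_s(p)\in K_p$, whence
\[
d(\Phi_s(p),\Phi_s(y))\ \ge\ d(K_p,K)-\alpha\ \ge\ 3\alpha.
\]
The triangle inequality now forces either $d(\Phi_s(x),\Phi_s(y))>\alpha$ or $d(\Phi_s(x),\Phi_s(p))>\alpha$; in either event a point of $B(x,\delta)$ (namely $y$ or $p$) witnesses sensitivity with constant $\alpha$.

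\textbf{Case 2: the system is minimal but not equicontinuous.} The key observation is that $X$ must be compact. The $M$-system hypothesis furnishes an almost periodic point $p$ whose orbit $O(p)$ is compact; minimality forces $\overline{O(p)}=X$, and compactness of $O(p)$ makes it closed, so $X=O(p)$ is compact. Hence we are in the classical compact minimal setting, and an Auslander--Yorke type argument applies: if the action were not sensitive, then for each $n$ one could find $x_n$ and $\delta_n>0$ with $d(\Phi_s(x_n),\Phi_s(y))\le 1/n$ for all $y\in B(x_n,\delta_n)$ and all $s\in\mathcal{S}$; use this together with compactness of $X$ to extract a genuine point of equicontinuity; transport it to every point of $X$ via the minimality of the system; and finally upgrade pointwise to global equicontinuity on the compact space $X$. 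This contradicts the hypothesis of Case 2. The $C$-semigroup condition enters precisely in the transport step, ensuring the equicontinuity estimates are uniform over $s\in\mathcal{S}$ modulo the relatively compact remainder $\mathcal{S}\setminus\mathcal{S}s_0$.

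\textbf{Main obstacle.} Case 1 follows the Banks--Brooks--Cairns--Davis--Stacey template cleanly once two disjoint compact minimal sets have been extracted and TT$\Rightarrow$DPT is invoked. The real work is in Case 2, but this case reduces to a compact setting thanks to the interaction of minimality with the Bronstein condition; the remaining technicalities are a standard Auslander--Yorke spreading argument, whose only delicate ingredient in the semigroup setting is the $C$-semigroup hypothesis used to propagate equicontinuity uniformly in $s$. I expect this spreading step --- promoting one almost-equicontinuous ball to global equicontinuity uniformly over the non-compact parameter set $\mathcal{S}$ --- to be where the bulk of the technical bookkeeping lies.
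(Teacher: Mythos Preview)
The paper does not contain a proof of this theorem: it is quoted from \cite{km} (Kontorovich--Megrelishvili, Theorem~5.5) as background, and the authors' own contributions are Theorems~\ref{theo0}--\ref{theo2}. So there is nothing in the present paper to compare your attempt against.

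That said, your outline follows the expected shape of the Kontorovich--Megrelishvili argument. Case~1 is the standard Banks~et~al./Akin--Auslander--Berg scheme: your extraction of two disjoint compact minimal sets from the Bronstein condition is correct (distinct minimal sets are disjoint, and a single compact minimal set carrying all almost periodic points would be dense and closed, hence all of $X$), and the remainder is routine once TT $\Rightarrow$ DPT supplies a transitive point in every ball. Your Case~2 reduction to a compact $X$ via an almost periodic point is also correct. What you have left as a sketch --- the Auslander--Yorke spreading of a single equicontinuity ball to global equicontinuity under a minimal action of a $C$-semigroup --- is indeed where the substantive use of the $C$-semigroup hypothesis occurs, and it is the one place where your write-up is a plan rather than a proof; in \cite{km} this step is handled via their Lemma~5.3/5.4 machinery, and you would need to reproduce that (or an equivalent) to have a complete argument.
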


In \cite{ip1}, assuming that \(X\) is a compact set, the previous result is generalized:

\begin{theorem}
\label{thm:1.1}
Assume that the dynamical system \((\mathcal{S}, X, \Phi)\) is TT, non-minimal, and \(\mathcal{M}\) (the set of minimal points) is a dense set. Then \((\mathcal{S}, X, \Phi)\) is sensitive.
\end{theorem}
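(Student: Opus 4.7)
My plan is to follow the scheme of the argument in \cite{3}, replacing periodic orbits by compact minimal sets. The first step is to observe that the hypotheses force the existence of two disjoint minimal sets $M_1,M_2\subset X$: because $X$ is compact, every orbit closure $\overline{O(z)}$ with $z\in\mathcal M$ is a compact minimal set, and if there were only one such set $M$, then density of $\mathcal M$ would give $\mathcal M=M$, which being closed would force $M=X$, contradicting non-minimality. I set $8\alpha=d(M_1,M_2)>0$ and claim that $\alpha/2$ is a sensitivity constant.

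Fix $x\in X$ and $\delta\in(0,\alpha)$, and let $U=B(x,\delta)$. Since $d(x,M_1)+d(x,M_2)\ge 8\alpha$, one of the two sets, say $M=\overline{O(p)}$, satisfies $d(x,M)\ge 4\alpha$. By density of $\mathcal M$ I pick $q\in U\cap\mathcal M$ and set $N=\overline{O(q)}$. The key auxiliary fact is a standard uniform-recurrence lemma: since $N$ is compact and minimal, for every open set $W$ meeting $N$ there is a finite $\{t_1,\ldots,t_n\}\subset\mathcal S$ with $N\subset\bigcup_{i}\Phi_{t_i}^{-1}(W)$ (for each $z\in N$ choose a return time to $W$, enlarge it by continuity to a neighborhood of $z$, and cover $N$ by finitely many such neighborhoods). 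I apply the lemma with $W=B(q,\delta)$, and then use continuity of the finitely many $\Phi_{t_i}$ to shrink a neighborhood of $p$ to an open set $V$ satisfying $\Phi_{t_i}(V)\subset B(\Phi_{t_i}(p),\alpha)$ for every $i$. Topological transitivity applied to $U$ and $V$ gives $s_0\in\mathcal S$ and $y\in U$ with $\Phi_{s_0}(y)\in V$, and the lemma applied to $\Phi_{s_0}(q)\in N$ gives an index $i_0$ with $\Phi_{t_{i_0}}(\Phi_{s_0}(q))\in B(q,\delta)$. Set $s=t_{i_0}s_0$; this $s$ realizes both conditions at once.

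Indeed $\Phi_s(q)\in B(q,\delta)\subset B(x,2\alpha)$, while $\Phi_s(y)=\Phi_{t_{i_0}}(\Phi_{s_0}(y))\in B(\Phi_{t_{i_0}}(p),\alpha)$ and $\Phi_{t_{i_0}}(p)\in M$ by invariance, so $d(\Phi_s(y),x)\ge 4\alpha-\alpha=3\alpha$. Hence $d(\Phi_s(y),\Phi_s(q))>\alpha$, and the triangle inequality at $\Phi_s(x)$ forces either $d(\Phi_s(x),\Phi_s(y))>\alpha/2$ or $d(\Phi_s(x),\Phi_s(q))>\alpha/2$; either way $U$ contains a point ($y$ or $q$) witnessing sensitivity with constant $\alpha/2$. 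The delicate point I expect is precisely the coordination between the transitivity time and a recurrence time for $q$: one cannot in general force $\Phi_s(y)$ close to the prescribed point $p$ while also forcing $\Phi_s(q)$ back into $U$. The workaround is to apply topological transitivity first, then post-compose with a recurrence time $t_{i_0}$; the invariance of $M$ under the semigroup action guarantees that $\Phi_s(y)$, though no longer near $p$, remains within $\alpha$ of some point of $M$ and hence far from $x$.
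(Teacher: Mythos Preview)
Your proof is correct and follows the classical scheme of \cite{3} (as extended to minimal points in \cite{aak}), transported to semigroup actions: two disjoint compact minimal sets supply a lower bound for the sensitivity constant, and a uniform-recurrence covering lemma lets you synchronize a transitivity time with a return time of the nearby minimal point $q$. Note, however, that the present paper does not itself prove Theorem~\ref{thm:1.1}; the result is quoted from \cite{ip1} as background. What the paper does prove is the strictly more general Theorem~\ref{theo0} (locally compact $X$ with a countable base, PT in place of TT), and that argument takes a genuinely different route from yours.

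The paper argues by contradiction. If the system were not sensitive, then for every $k$ there is a nonempty open $V_k$ with $\mathrm{diam}\,\Phi_s(V_k)\le 1/k$ for all $s\in\mathcal S$ (Remark~\ref{rk1}). Given an arbitrary $x\in X$ and a target ball $B(w,r)$, one places a transitive point $z$ and a nearby minimal point $y$ inside such a $V_{k_0}$ (Lemma~\ref{transitive} supplies the transitive point), chooses $s'_n$ with $\Phi_{s'_n}(z)\to x$, extracts by local compactness a limit $y_0$ of $\Phi_{s'_n}(y)$, and then uses minimality at $y_0$ together with the uniform diameter bound on $\Phi_s(V_{k_0})$ to push some $\Phi_{s_0}\Phi_{s_1}(x)$ into $B(w,r)$. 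Hence every orbit is dense, contradicting non-minimality. Your route is direct and constructive, exhibiting the sensitivity constant as a fraction of $d(M_1,M_2)$; the paper's route is indirect and never needs two disjoint minimal sets nor the finite-cover recurrence lemma. That economy is precisely what allows it to pass from compact to merely locally compact $X$: both the existence of compact minimal orbit closures and your finite-cover argument for uniform recurrence use compactness of the ambient space in an essential way, so your approach does not extend to the setting of Theorem~\ref{theo0}.
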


We say that $\phi_s\in \Phi$  is  almost open if $int (\phi_s (U))$ ($int(W)$ denote interior of $W$) is nonempty whenever $U$ is opene ( open and nonempty). 
The system \((\mathcal{S}, X, \Phi)\) is almost open if $\phi_s$ is  almost open for any $s\in \mathcal{S}$.
The class of dynamical systems \((\mathcal{S}, X, \Phi)\) defined by the union of C-semigroups and almost open semigroups $\mathcal{S}$ is denoted by $\mathfrak{U}$  

In \cite{z}, the following theorem was established for non-compact spaces:

\begin{theorem}
\label{thm:3}
Let \(X\) be a locally compact metric space with a countable base. Let \((\mathcal{S}, X, \Phi)\in \mathfrak{U}\) satisfy the following two conditions:
\begin{enumerate}
    \item It is topologically transitive.
    \item At least one of the sets,
    \begin{enumerate}
        \item the union of minimal sets,
        \item the union of points having closed orbits,
    \end{enumerate}
    is a proper dense subset in \(X\).
\end{enumerate}
Then \((\mathcal{S}, X, \Phi)\) is sensitive to any metric \(\rho\) metrizing \(X\).
\end{theorem}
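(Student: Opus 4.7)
The plan is to adapt the classical Banks--Brooks--Cairns--Davis--Stacey argument to the locally compact, non-compact setting, using the proper dense set $Y$ supplied by hypothesis~(2) as a substitute for two distinct minimal sets or periodic orbits. Since $X$ is locally compact and second countable and the action is topologically transitive, a standard Baire category argument yields that $\mathrm{Trans}(X)$ is a dense $G_\delta$ in $X$. Let $Y$ denote whichever of the two sets in~(2) is proper dense, fix a point $z \in X \setminus Y$ (nonempty by properness), and use local compactness to choose $r > 0$ such that $\overline{B(z,r)}$ is compact. I propose $\alpha := r/8$ as the candidate sensitivity constant.

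Fix $x \in X$ and $\delta > 0$, and write $U = B(x,\delta)$. By density of $Y$, choose $q \in U \cap Y$; by density of $\mathrm{Trans}(X)$, choose a transitive $t \in U$. Since the orbit of $t$ is dense, there is $s \in \mathcal{S}$ with $\Phi_s(t) \in B(z,\alpha)$. Because $\overline{O(q)}$ is either a minimal set or a closed orbit contained in $Y$ and $z \notin Y$, the closed set $\overline{O(q)}$ misses $z$, so $d(\Phi_s(q), z) > 0$. The key step is to upgrade this pointwise separation to the uniform bound $d(\Phi_s(q), z) > 3\alpha$. Once this holds, the triangle inequality forces $d(\Phi_s(t), \Phi_s(q)) > 2\alpha$, and comparing each to $\Phi_s(x)$ identifies one of $q, t$ as the required witness $y \in U$ with $d(\Phi_s(x), \Phi_s(y)) > \alpha$.

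The class $\mathfrak{U}$ hypothesis enters precisely at this uniform-separation step. In the $C$-semigroup case, relative compactness of $\mathcal{S} \setminus \mathcal{S}s_0$ lets one reduce the analysis to a compact set of semigroup elements on which the diameters of $\Phi_s(U)$ behave continuously, so shrinking $\delta$ controls $\Phi_s(U)$ uniformly in $s$; combined with the choice of $r$, this forces $\Phi_s(U) \subset B(\Phi_s(x), r/2)$ and thereby provides the desired bound. In the almost-open case, one uses $\mathrm{int}(\Phi_s(U)) \neq \emptyset$ together with TT to transfer the small ball near $z$ back into a neighborhood of some $y \in U$ whose image under $\Phi_s$ lies in $B(z, \alpha)$ while the image of $q$ stays outside $B(z, 3\alpha)$. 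The claim that $\alpha$ is a sensitivity constant for any compatible metric $\rho$ then follows because $\overline{B_\rho(z, r)}$ is compact for sufficiently small $r$ regardless of the choice of $\rho$; the constant is rechosen as $\alpha(\rho) = r(\rho)/8$.

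The main obstacle is precisely the uniform-separation step: a priori the distance $d(\overline{O(q)}, z)$ can be arbitrarily small as $q$ varies over $U \cap Y$, and non-compactness of $X$ provides no global lower bound over $q$. Making the step rigorous requires controlling the orbit of an entire small open set rather than that of a single tame point, exploiting either the compactness buried inside the $C$-semigroup hypothesis or the interior-filling property of almost-open maps, and matching this control to the scale $r$ afforded by local compactness at $z$.
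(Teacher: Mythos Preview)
The paper does not itself prove Theorem~\ref{thm:3}; it is quoted from \cite{z}, and the paper's own Theorems~\ref{theo0} and~\ref{theo1} are strengthenings that drop the $\mathfrak{U}$ hypothesis altogether. Their method is entirely different from yours: they argue by contradiction. If the system is not sensitive, Remark~\ref{rk1} supplies, for each $k$, an open $V_k$ with $\operatorname{diam}\Phi_s(V_k)\le 1/k$ for every $s\in\mathcal S$. One then shows that \emph{every} $x\in X$ has dense orbit, contradicting non-minimality. The mechanism is to take a transitive $z\in V_{k_0}$ and a nearby minimal (resp.\ closed-orbit) point $y\in V_{k_0}$; along a sequence $\Phi_{s'_n}(z)\to x$, local compactness yields a subsequential limit $y_0=\lim\Phi_{s'_n}(y)\in\overline{O(y)}$, and minimality (resp.\ closedness of $O(y)$) returns $y_0$ to the small set $V_{k_0}$ (resp.\ realizes $y_0=\Phi_{s_0}(y)$). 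The uniform diameter bound on $V_{k_0}$ then transfers the dense orbit from $z$ to $x$. The class $\mathfrak{U}$ never enters.

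Your direct BBCDS-style route has a genuine gap at precisely the point you flag. You need $d(\Phi_s(q),z)>3\alpha$ for the particular $s$ with $\Phi_s(t)\in B(z,\alpha)$, but $z\notin\overline{O(q)}$ gives no uniform lower bound: as $q$ ranges over $U\cap Y$ the distance $d(\overline{O(q)},z)$ can be arbitrarily small, and nothing correlates the $s$ that works for $t$ with one that is bad for $q$. Your $C$-semigroup sketch does not fix this: the assertion that shrinking $\delta$ forces $\Phi_s(U)\subset B(\Phi_s(x),r/2)$ \emph{for all} $s$ is exactly equicontinuity at $x$, which the $C$-semigroup property does not provide (and if it held at every $x$ the system would be equicontinuous, not sensitive). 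The almost-open sketch is too vague to be an argument. Separately, in case~(2)(b) your claim that $\overline{O(q)}\subset Y$ is unjustified: $O(q)$ closed does not imply that each point of $O(q)$ has a closed orbit, so you have not even secured $z\notin\overline{O(q)}$ there.
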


The following example shows the difference between (TT) and (PT):

\begin{example}
\label{example3}
Let \(S_1\) and \(S_2\) be two disjoint circles. Consider \(X = S_1 \cup S_2\) and functions \(f, g, h: X \to X\) such that:
\begin{itemize}
\item \(f|_{S_1}\) is an irrational rotation and \(f|_{S_2}\) is the identity,
\item \(g|_{S_1}\) is the identity and \(g|_{S_2}\) is an irrational rotation,
\item \(h|_{S_1}\) is the identity and \(h(S_2) \subset S_1\).
\end{itemize}
Let \(\mathcal{S}\) be the free semigroup generated by three elements \(\{a, b, c\}\) and \(\Phi\) the action generated by \(\Phi_a = f\), \(\Phi_b = g\), and \(\Phi_c = h\). The following properties are easy to prove:
\begin{enumerate}
\item The set of transitive points coincides with \(S_2\). It is worth noting that it is false that if \(x \in X\) is transitive, then \(\Phi_s(x)\) is transitive for all \(s \in \mathcal{S}\). Consequently, the set of transitive points is not dense.
\item For any pair of open sets \(U \subset S_1\), \(V \subset S_2\), there does not exist \(s \in \mathcal{S}\) such that \(\Phi_s(U) \cap V \neq \emptyset\). That is, \((\mathcal{S}, X, \Phi)\) is point transitive but not topologically transitive.
\end{enumerate}
\end{example}

\newpage
 We will now state our main results:

\begin{theorem}\label{theo0}
 Let $X$ be a locally compact metric space with a countable base and $( \mathcal{S}, X, \Phi )$ that satisfy
the following conditions:\\
(1) is PT,\\
(2) $\mathcal{M}$ is dense and\\
(3) non minimal.\\
Then $( \mathcal{S}, X, \Phi )$  is sensitive.
\end{theorem}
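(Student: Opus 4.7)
The plan is to adapt the argument from the compact case (Theorem~\ref{thm:1.1}) to the locally compact non-compact setting, using local compactness of $X$ to manufacture a uniform positive sensitivity constant. The first step is to produce two disjoint minimal sets: density of $\mathcal{M}$ supplies a minimal point $p_{1}$, and non-minimality of $(\mathcal{S},X,\Phi)$ forces $M_{1}:=\overline{O(p_{1})}\subsetneq X$; density of $\mathcal{M}$ applied to the non-empty open set $X\setminus M_{1}$ yields a second minimal point $p_{2}$ whose orbit closure $M_{2}:=\overline{O(p_{2})}$ is disjoint from $M_{1}$, since distinct minimal sets intersect trivially.

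The second step fixes a positive sensitivity constant. In the compact case one may simply use $\alpha=d(M_{1},M_{2})/8$, but that quantity can vanish on non-compact $X$. Instead, using local compactness, I would choose an open neighborhood $W\ni p_{1}$ whose closure $\overline{W}$ is compact and satisfies $\overline{W}\cap M_{2}=\emptyset$; compactness of $\overline{W}$ together with $M_{2}$ being closed then gives $\eta:=d(\overline{W},M_{2})>0$, and I set $\alpha:=\eta/4$.

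The third step verifies sensitivity at an arbitrary $x\in X$ with $\delta>0$: the goal is to produce $y\in B(x,\delta)$ and $s\in\mathcal{S}$ with $d(\Phi_{s}(x),\Phi_{s}(y))>\alpha$. By density of $\mathcal{M}$ inside $B(x,\delta)$, pick a minimal point $q\in B(x,\delta)$; repeating the first-step argument inside $B(x,\delta)$ when possible yields a second minimal point $q'\in B(x,\delta)$ in a minimal set distinct from $\overline{O(q)}$. I would then use PT via the transitive orbit $O(z)$, together with continuity of $\Phi_{s}$ at suitable iterates of $z$, to locate a semigroup element $s$ at which $\Phi_{s}(q)$ enters $W$ while $\Phi_{s}(q')$ approaches $M_{2}$ (or vice versa), so that $d(\Phi_{s}(q),\Phi_{s}(q'))>2\alpha$. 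The triangle inequality then forces $d(\Phi_{s}(x),\Phi_{s}(q))>\alpha$ or $d(\Phi_{s}(x),\Phi_{s}(q'))>\alpha$.

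The principal obstacle is this third step. Since $\mathcal{S}$ is only a semigroup (no inverses), and PT supplies only a single transitive orbit rather than density of transitive points, as Example~\ref{example3} demonstrates, one cannot simply pull open sets back through $\Phi_{s}$ as in the classical TT argument. The technical crux is to leverage continuity of the semigroup action along the dense orbit $O(z)$ and local compactness of $X$ to realize an actual element $s\in\mathcal{S}$ at which the required divergence occurs; the degenerate case in which $B(x,\delta)$ is contained in a single minimal set needs separate treatment, using the diameter of that minimal set measured against $\alpha$.
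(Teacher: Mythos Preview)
Your third step has a genuine gap that is not merely technical. The constant $\alpha=\eta/4$ is tied to the two specific minimal sets $M_{1},M_{2}$ and the compact neighborhood $W\ni p_{1}$, but the minimal points $q,q'$ you pick inside $B(x,\delta)$ have orbit closures $\overline{O(q)},\overline{O(q')}$ that need bear no relation to $M_{1}$ or $M_{2}$. Since the orbit of $q$ is confined to its own minimal set, there is no reason $\Phi_{s}(q)$ should ever enter $W$, and likewise no reason $\Phi_{s}(q')$ should approach $M_{2}$; so the sentence ``locate a semigroup element $s$ at which $\Phi_{s}(q)$ enters $W$ while $\Phi_{s}(q')$ approaches $M_{2}$'' cannot be justified. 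The classical Banks et al.\ argument you are imitating gets around this because periodic orbits are finite, so one can synchronize returns; for general minimal sets in a semigroup action that mechanism is unavailable, and invoking the single transitive orbit $O(z)$ does not help, since $q$ and $q'$ are not on that orbit. You also overlooked that under the hypotheses PT together with density of $\mathcal{M}$ already forces TT and hence DPT (this is Lemma~\ref{transitive}), so transitive points are in fact dense; but even granting DPT, the obstruction above remains.

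The paper's proof proceeds along an entirely different axis: it argues by contradiction. Assuming non-sensitivity, Remark~\ref{rk1} produces, for each $k$, an open set $V_{k}$ with $\operatorname{diam}(\Phi_{s}(V_{k}))\le 1/k$ for every $s\in\mathcal{S}$. One then shows that \emph{every} $x\in X$ has dense orbit, contradicting non-minimality. Concretely, given $x$ and a target ball $B(w,r)$, pick $V_{k_{0}}$ with $1/k_{0}<r/4$, a transitive point $z\in V_{k_{0}}$ (available by Lemma~\ref{transitive}), and a minimal point $y\in V_{k_{0}}$ near $z$; choose $s_{0}$ sending a small ball around $z$ into $B(w,r/8)$. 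Take $s'_{n}$ with $\Phi_{s'_{n}}(z)\to x$; local compactness lets you assume $\Phi_{s'_{n}}(y)\to y_{0}$, and minimality of $y$ gives $s_{1}$ with $\Phi_{s_{1}}(y_{0})$ back near $z$. The uniform diameter bound on $\Phi_{s}(V_{k_{0}})$ then forces $\Phi_{s_{0}}\Phi_{s_{1}}(x)\in B(w,r)$. The point is that the ``small diameter'' hypothesis replaces any need to compare unrelated minimal sets: it glues the orbit of $x$ to the orbit of the nearby minimal point $y$, and minimality of $y$ provides the recurrence.
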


The difference between this result and Theorem~\ref{thm:3} is that it is not necessary for the dynamical system to belong to the family \(\mathfrak{U}\), and furthermore, (TT) is replaced by (PT).\\
The proof is inspired by the ideas used in \cite{z}.
Furthermore, we derive the following results that imply sensitivity.\\
Let  $\mathcal{F}$ be the union of points having closed orbits.
\begin{theorem}\label{theo1}
 Let $X$ be a locally compact metric space with a countable base and $( \mathcal{S}, X, \Phi )$ that satisfy
the following conditions:\\
(1) $Trans(X)^{*}\neq\emptyset$,\\
(2)  $\mathcal{F}$ is a dense subset in $X$ and\\
(3) non mininal\\ 
Then $( \mathcal{S}, X, \Phi )$  is sensitive.
\end{theorem}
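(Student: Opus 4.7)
The plan is to mimic the classical Banks--Brooks--Cairns--Davis--Stacey argument adapted to semigroup actions on non-compact spaces, as in \cite{z}, with the hypothesis $Trans(X)^{*}\neq\emptyset$ substituting for the almost-open assumption used there. From the non-minimality hypothesis, there exists $y_{0}\in X$ with $K:=\overline{O(y_{0})}\neq X$; this $K$ is a proper, closed, forward-invariant subset of $X$. Pick $p\in X\setminus K$ and set $3\alpha:=d(p,K)>0$. I claim that $\alpha$ is a sensitivity constant.

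Fix $x\in X$ and $\delta>0$, and set $U=B(x,\delta)$. Two witnesses inside $U$ are produced. Using $Trans(X)^{*}\neq\emptyset$, pick $x^{*}\in Trans(X)^{*}$; its orbit is dense, so there is $s_{0}\in\mathcal{S}$ with $w:=\Phi_{s_{0}}(x^{*})\in U$. One checks that $w\in Trans(X)^{*}$, so $O(w)$ is dense and every iterate $\Phi_{s}(w)$ is itself transitive. By density of $\mathcal{F}$, we also fix $z\in U\cap\mathcal{F}$ with closed orbit $K_{z}:=O(z)$.

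The proof then proceeds by a dichotomy. Since $O(w)$ is dense, there is $s_{1}\in\mathcal{S}$ with $\Phi_{s_{1}}(w)\in B(p,\alpha/2)$. If $d(\Phi_{s_{1}}(x),\Phi_{s_{1}}(w))>\alpha$, we are done with $y=w$ and $s=s_{1}$. Otherwise $\Phi_{s_{1}}(x)\in B(p,3\alpha/2)$, and in particular $d(\Phi_{s_{1}}(x),K)\geq 3\alpha/2$. In this second case, I exploit the closed orbit $K_{z}$ and the strong transitivity of every iterate of $w$ to produce a time $s$ and a second witness $y'\in U$ such that $\Phi_{s}(y')$ lies within $\alpha/2$ of some point of $K$ while $\Phi_{s}(x)$ remains at distance at least $3\alpha/2$ from $K$, yielding $d(\Phi_{s}(x),\Phi_{s}(y'))\geq\alpha$.

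The main obstacle lies precisely in this second arm of the dichotomy: without the almost-open hypothesis used in \cite{z}, one cannot directly pull back openness of $\Phi_{s}(U)$ to locate a witness in $U$ whose image at the \emph{prescribed} time lies close to $K$. The substitute is the full strength of $Trans(X)^{*}\neq\emptyset$: every iterate of $w$ has dense orbit, and this density can be combined with the forward invariance of $K$ and with the closed orbit $K_{z}$ sitting inside $U$ to produce the required witness at a common time, by a careful adaptation of the case analysis in \cite{z}. Verifying this compatibility is the technical crux of the proof.
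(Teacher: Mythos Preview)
Your proposal has a genuine gap: the second arm of the dichotomy is never carried out, and with only the stated hypotheses I do not see how it can be. You establish $d(\Phi_{s_{1}}(x),K)\geq 3\alpha/2$ at the single time $s_{1}$, and then ask for a witness $y'\in U$ and a time $s$ with $\Phi_{s}(y')$ within $\alpha/2$ of $K$ while $\Phi_{s}(x)$ is still at distance $\geq 3\alpha/2$ from $K$. Nothing forces $\Phi_{s}(x)$ to stay far from $K$ at times $s\neq s_{1}$, and nothing in the hypotheses puts a point of $U$ into the preimage of $K$: density of $\mathcal{F}$ supplies points with closed orbit, not points whose orbit meets the fixed set $K$. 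The closed orbit $K_{z}$ you bring in is unrelated to $K$; and since $K_{z}$ depends on $U$, it cannot produce a \emph{uniform} sensitivity constant. The BBCDS/AAK template succeeds because periodic (or compact minimal) points give, in every ball, a point whose \emph{entire} forward orbit is confined to one fixed set chosen in advance; arbitrary closed orbits do not provide this confinement, so the template does not transfer, and the appeal to $Trans(X)^{*}$ does not repair the missing synchronisation.

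The paper proceeds by an entirely different route. It argues by contradiction: if the system is not sensitive, then by Remark~\ref{rk1} for every $k$ there is an opene set $V_{k}$ with $diam(\Phi_{s}(V_{k}))\leq 1/k$ for all $s\in\mathcal{S}$. Fixing $x$ with $\overline{O(x)}\neq X$ and any target ball $B(w,r)$, one places both some $z\in Trans(X)^{*}$ and some $y\in\mathcal{F}$ inside the same $V_{k_{0}}$, chooses $s'_{n}$ with $\Phi_{s'_{n}}(z)\to x$, and (by local compactness) passes to a subsequence with $\Phi_{s'_{n}}(y)\to y_{0}$. The closed-orbit hypothesis on $y$ is used only at this moment: it gives $y_{0}=\Phi_{s_{0}}(y)$ for some $s_{0}\in\mathcal{S}$, so that $\Phi_{s_{0}}(z)$ is meaningful and, because $z\in Trans(X)^{*}$, still transitive. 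Choosing $s_{1}$ with $\Phi_{s_{1}}\Phi_{s_{0}}(z)\in B(w,r/8)$ and applying the uniform diameter bound on $V_{k_{0}}$ twice yields $\Phi_{s_{1}}(x)\in B(w,r)$, hence $\overline{O(x)}=X$, a contradiction. The key idea you are missing is to \emph{exploit} non-sensitivity rather than fight it: the small-diameter sets let one transfer density of orbit from $z$ to $x$ through the intermediary $y$, and it is precisely the closedness of $O(y)$ that converts the limit $y_{0}$ back into an honest time $s_{0}$ at which $Trans(X)^{*}$ can be invoked.
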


If $Trans(X)^{*}\neq\emptyset$, we obtain a generalization of the Theorem \ref{theo0}.\\
 Denote by $\mathcal{M}^{-1}=\cup_{s \in \mathcal{S}}\Phi_s^{-1}(\mathcal{M})$.

\begin{theorem}\label{theo2}
 Let $X$ be a locally compact metric space with a countable base and $( \mathcal{S}, X, \Phi )$ that satisfy
the following conditions:\\
(1) $Trans(X)^{*}\neq\emptyset$;\\
(2)  $\mathcal{M}^{-1}$ is dense in $X$ and\\
(3) non minimal.\\
Then $( \mathcal{S}, X, \Phi )$  is sensitive.
  \end{theorem}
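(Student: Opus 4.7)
I will exhibit an explicit sensitivity constant $\alpha>0$ coming from the non-minimality hypothesis, and then, given any pair $(x,\delta)$, construct a witness $y\in B(x,\delta)$ and a time $s\in\mathcal{S}$ with $d(\Phi_s(x),\Phi_s(y))>\alpha$ by combining a nearby transitive point with a nearby point whose forward orbit eventually enters a proper minimal set, closing the argument via the triangle inequality.

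Setup: since $\mathcal{M}^{-1}=\bigcup_{s\in\mathcal{S}}\Phi_s^{-1}(\mathcal{M})$ is dense (hence nonempty), $\mathcal{M}\neq\emptyset$; fix $m_0\in\mathcal{M}$, let $M_0=\overline{O(m_0)}$, and note $M_0\subsetneq X$ by hypothesis~(3). Pick $p_0\in X\setminus M_0$ and set $\alpha=d(p_0,M_0)/8>0$. Next I would record that $Trans(X)^*$ is dense: for $x^*\in Trans(X)^*$, the identity $\Phi_r(\Phi_s(x^*))=\Phi_{rs}(x^*)\in Trans(X)$ gives $\Phi_s(x^*)\in Trans(X)^*$, so $O(x^*)\subset Trans(X)^*$, and the latter is dense because $x^*\in Trans(X)$.

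Given $(x,\delta)$, pick $x_1\in B(x,\delta)\cap Trans(X)^*$ and $y_1\in B(x,\delta)\cap\mathcal{M}^{-1}$, choose $t_1\in\mathcal{S}$ with $\Phi_{t_1}(y_1)\in\mathcal{M}$, and set $M_1=\overline{O(\Phi_{t_1}(y_1))}$ (minimal and, by~(3), proper). By the triangle inequality it suffices to produce $s\in\mathcal{S}$ with $d(\Phi_s(x_1),\Phi_s(y_1))>2\alpha$: one of $d(\Phi_s(x),\Phi_s(x_1))$, $d(\Phi_s(x),\Phi_s(y_1))$ must then exceed $\alpha$, and the corresponding choice in $\{x_1,y_1\}$ is the required $y$. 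Suppose for contradiction no such $s$ exists. Since $M_1$ is forward invariant, $\Phi_{rt_1}(y_1)=\Phi_r(\Phi_{t_1}(y_1))\in M_1$ for every $r$, so $d(\Phi_{rt_1}(x_1),M_1)\le 2\alpha$ for all $r$; density of the transitive orbit of $\Phi_{t_1}(x_1)\in Trans(X)$ then forces $X=\overline{N_{2\alpha}(M_1)}$, whence $d(p_0,M_1)\le 2\alpha$. If $M_1=M_0$ this already contradicts $d(p_0,M_0)=8\alpha$.

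The main obstacle is the residual case $M_1\neq M_0$: here $M_0$ and $M_1$ are disjoint proper minimal sets while $M_1$ is $2\alpha$-dense in $X$, placing points of $M_1$ within $2\alpha$ of both $p_0$ and $m_0$. Ruling out this configuration is the delicate step, and it is where the full strength of the hypotheses is used, exploiting the disjointness of the two minimal sets, the gap $d(p_0,m_0)\ge 8\alpha$, the orbit-density inside the minimal set $M_1$, and the assumption $Trans(X)^*\neq\emptyset$ (so that dense transitive orbits are not trapped in $\overline{N_{2\alpha}(M_1)}$); I would close this step following the geometric dynamical arguments of \cite{z}, on which the paper explicitly states this proof is modelled.
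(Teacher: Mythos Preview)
Your proposal has a genuine gap, and the paper's proof takes a completely different route that avoids it.

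The paper proves this theorem in two lines by a case split on whether $\mathcal{M}$ itself is dense. If $\mathcal{M}$ is dense, Theorem~\ref{theo0} applies (note $Trans(X)^*\neq\emptyset$ implies PT). If $\mathcal{M}$ is not dense, then $\mathcal{M}$ is an invariant non-dense set whose saturation $\mathcal{M}^{-1}$ is dense by hypothesis, so Proposition~\ref{prop2} applies directly. In the second case the sensitivity constant comes from a ball disjoint from \emph{all} of $\mathcal{M}$, so whichever minimal set the point $y_1$ eventually lands in, its forward orbit stays far from that ball; this is exactly what makes the argument uniform.

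Your approach instead fixes a single minimal set $M_0$ and takes $\alpha=d(p_0,M_0)/8$. This works only when the minimal set $M_1$ produced near the given point happens to equal $M_0$. In the residual case $M_1\neq M_0$ you obtain only that $M_1$ is $2\alpha$-dense in $X$, which is no contradiction: when $\mathcal{M}$ is dense (the case handled in the paper by Theorem~\ref{theo0}), there may be minimal sets $2\alpha$-dense in $X$ and disjoint from $M_0$, and nothing in your setup rules this out. Your appeal to ``the geometric dynamical arguments of \cite{z}'' is not a proof, and in any event the remark that the argument is modelled on \cite{z} refers to Theorem~\ref{theo0}, whose proof proceeds by contradiction (assume non-sensitivity, get equicontinuity-type control on an open set, and show every orbit is dense) rather than by exhibiting an explicit constant in advance. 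The missing idea is precisely the dichotomy: either $\mathcal{M}$ is not dense and a uniform gap from the whole of $\mathcal{M}$ exists, or $\mathcal{M}$ is dense and one must argue indirectly as in Theorem~\ref{theo0}.
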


\section{Proof of Theorems \ref{theo0} and \ref{theo1}.}

For the proof of the main theorems we need the following lemma:

\begin{lemma}\label{transitive}
 Assume that $( \mathcal{S}, X, \Phi )$ is PT and $\mathcal{M}$ is dense. Then:
 \begin{enumerate} 
 \item $( \mathcal{S}, X, \Phi )$  is TT.
 \item  If $X$ is locally compact metric space with a countable base, $( \mathcal{S}, X, \Phi )$  is DPT.   
\end{enumerate}
\end{lemma}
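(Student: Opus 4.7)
For part (1), my plan is the following. Let $x_0$ be a point with $\overline{O(x_0)}=X$, which exists by PT, and fix opene sets $U,V\subset X$. Since $O(x_0)$ is dense, I can choose $s_0,s_1\in\mathcal{S}$ with $\Phi_{s_0}(x_0)\in V$ and $\Phi_{s_1}(x_0)\in U$; by continuity of each $\Phi_{s_i}$, the set $W:=\Phi_{s_0}^{-1}(V)\cap\Phi_{s_1}^{-1}(U)$ is a nonempty open neighborhood of $x_0$. Using that $\mathcal{M}$ is dense, I then pick a minimal point $m\in W$, so that simultaneously $\Phi_{s_0}(m)\in V$ and $\Phi_{s_1}(m)\in U$. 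Now set $M:=\overline{O(m)}$, a minimal set. Since $\Phi_{s_1}(m)\in O(m)\subset M$ and $M$ is minimal, one has $\overline{O(\Phi_{s_1}(m))}=M\ni\Phi_{s_0}(m)$, so the open set $V$ meets the orbit closure of $\Phi_{s_1}(m)$, hence meets its orbit. Choosing $s_2\in\mathcal{S}$ with $\Phi_{s_2}(\Phi_{s_1}(m))\in V$ and using $\Phi_{s_1}(m)\in U$, I obtain $\Phi_{s_2}(U)\cap V\neq\emptyset$, which is TT.

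For part (2), I would simply invoke the Baire-category result already recalled in the introduction: on a locally compact metric space with a countable base (hence a Baire space), TT implies DPT. Explicitly, fix a countable base $\{V_n\}_{n\in\N}$ of $X$; part (1) shows that each $G_n:=\bigcup_{s\in\mathcal{S}}\Phi_s^{-1}(V_n)$ is open and meets every opene set, so it is dense. By the Baire property, $\bigcap_n G_n$ is a dense $G_\delta$, and by construction every one of its points has dense orbit.

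The real obstacle is the first half. Without any openness, almost-openness, or group-inversion hypothesis on the maps $\Phi_s$, one cannot directly translate the transitive orbit of $x_0$ into an arbitrary opene set $U$. The device that makes the argument work is to move from $x_0$ to a nearby \emph{minimal} point $m$ lying in a small open set that the orbit of $x_0$ hits both into $U$ and into $V$; the minimality of $\overline{O(m)}$ then guarantees that every point of the $\mathcal{S}$-orbit of $m$ has full orbit closure inside $M$, which is precisely what provides the single semigroup element that sends a point of $U$ into $V$.
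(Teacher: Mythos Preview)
Your proposal is correct and follows essentially the same route as the paper: for item~(1) you pick an open neighbourhood of the transitive point that maps into both $U$ and $V$, choose a minimal point there, and use minimality to produce $s_2$ sending $U$ into $V$; for item~(2) you run the standard Baire argument with the countable base, just as the paper does (your union over all $s\in\mathcal{S}$ versus the paper's choice of one $s(k,m)$ per pair is an inessential variation).
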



{\bf{Proof item (1).}} Let $U$ and $V $ be opene sets and $z\in X$ such that $\overline{O(z)}=X$. Then there exist $\Phi_s, \Phi_{s_{1}} $ and $W_z$ a neighbourhood of $z$ such that $\Phi_s (W_z)\subset U$ and $\Phi_{s_{1}} (W_z)\subset V$.
   As $\mathcal{M}$ is dense there exists $y\in \mathcal{M}\cap W_z$. Then $\Phi_s (y)\in U$ and $\Phi_{s_{1}} (y)\in V$. As $y$ is a minimal point then there exists $\Phi_{s_{2}} $ such that $\Phi_{s_{2}}\Phi_s (y)\in V$. Then $\Phi_{s_{2}}(U)\cap V\neq \emptyset$.\\
   {\bf{Proof item (2).}} Let $\{U_k\}_{k\in\N}$ be a countable base. By item 1), given $k,m\in \N$ there exists $s(k,m)\in  \mathcal{S}$ such that $\Phi_{s(k,m)} (U_m)\cap U_k\neq\emptyset$. So, by Baire's Theorem,
 
 $$R=\bigcap_{k\geq 0}\bigcup_{m\geq 0}  \Phi^{-1}_{s(k,m)} (U_k) \mbox{ is a residual of dense orbits}. $$
 
$\Box$

\vspace{.5cm}

It follows, from the definition of sensitivity, that:

\begin{rk}\label{rk1}
If  $( \mathcal{S}, X, \Phi )$ is nonsensitive then  for all $k\in \N$ there exists an opene set $V_{k}$ such that $diam (\Phi_s (V_{k})) \leq 1/k$ for all $s\in \mathcal{S}$.
\end{rk}

{\bf{Proof of Theorem \ref{theo0}:}}\\
Suppose that \((\mathcal{S}, X, \Phi)\) is not sensitive. Given \(x \in X\), we will prove that \(\overline{O(x)} = X\), which leads to a contradiction because \((\mathcal{S}, X, \Phi)\) is non-minimal.\\ Let $w\in X$ and consider $B(w,r)$, $r>0$. Let $k_0$ be such that $1/k_{0}< r/4$. As $( \mathcal{S}, X, \Phi )$ is nonsensitive, by Remark \ref{rk1}, there exists an opene $V_{k_{0}}$ such that $diam (\Phi_s (V_{k_{0}})) \leq 1/k{_{0}}$ for all $s\in \mathcal{S}$.\\
As $( \mathcal{S}, X, \Phi )$ is PT and $\mathcal{M}$ is dense, then (by Lemma \ref{transitive}) $( \mathcal{S}, X, \Phi )$  is DPT. So, there exists $z\in V_{k_{0}}$ be such that $\overline{O(z)}=X$.\\ Let $s_0\in  \mathcal{S}$ and $\varepsilon >0$ be such that 
\begin{equation}\label{eq1}
 B(z,\varepsilon )\subset V_{k_{0}} \mbox{ and } \Phi_{s_{0}}(B(z,\varepsilon ))\subset B(w,r/8). 
\end{equation}

As $\overline{O(z)}=X$, there exists a sequence $\{s^{'}_n\}$,   $s^{'}_n\in \mathcal{S}$ such that $\Phi_{s^{'}_{n}}(z)\to x$. As $\mathcal{M}$ is dense, there exists a minimal set $M_1$ and $y\in M_1$ such that $y\in B(z,\varepsilon )\subset V_{k_{0}}$. Without loss of generality we can assume that $\Phi_{s^{'}_{n}}(y)\to y_0$ (here we are using that $X$ locally compact). As $y_0\in M_1$ there exist $s_1\in \mathcal{S}$ such that $\Phi_{s_{1}}(y_0)\in  B(z,\varepsilon )$.
  
  Since $z,y\in V_{k_{0}}$ then $ d ( \Phi_{s_{0}} \Phi_{s_{1}} \Phi_{s^{'}_{n}}(z) , \Phi_{s_{0}} \Phi_{s_{1}} \Phi_{s^{'}_{n}}(y) )\leq 1/k{_{0}}$. As $\Phi_{s^{'}_{n}}(z)\to x$ and $\Phi_{s^{'}_{n}}(y)\to y_0$ we obtain 

\begin{equation}\label{eq2}
  d ( \Phi_{s_{0}} \Phi_{s_{1}}(x) , \Phi_{s_{0}} \Phi_{s_{1}} (y_0) ) = \lim_{n\to +\infty} d ( \Phi_{s_{0}} \Phi_{s_{1}} \Phi_{s^{'}_{n}}(z) , \Phi_{s_{0}} \Phi_{s_{1}} \Phi_{s^{'}_{n}}(y) )  \leq  1/k{_{0}}\cdot
  \end{equation}

As $d(w,\Phi_{s_{0}} \Phi_{s_{1}} (y_0) )\leq r/8$ and $d ( \Phi_{s_{0}} \Phi_{s_{1}}(x) , \Phi_{s_{0}} \Phi_{s_{1}} (y_0) )\leq  1/k{_{0}}<r/4$ then $d(w,\Phi_{s_{0}} \Phi_{s_{1}} (x) )< r$.
  
$\Box$

\vspace{.5cm}

{\bf{Proof of Theorem \ref{theo1}:}}\\
Suppose that $( \mathcal{S}, X, \Phi )$ is nonsensitive.
  By item(3) of the hypothesis, there exists \( x \in X \) such that \( \overline{O(x)} \neq X \). We will proceed to show that \( \overline{O(x)} = X \), which leads to a contradiction. Let $w\in X$ and consider $B(w,r)$, $r>0$. Let $k_0$ be such that $1/k_{0}< r/4$. As $( \mathcal{S}, X, \Phi )$ is nonsensitive, by Remark \ref{rk1}, there exists an opene $V_{k_{0}}$ such that $diam (\Phi_s (V_{k_{0}})) \leq 1/k{_{0}}$ for all $s\in \mathcal{S}$.

 Let $z\in Trans(X)^{*}\cap V_{k_{0}}$ be  and a sequence $\{s^{'}_n\}$, $s^{'}_n\in \mathcal{S}$, such that $\Phi_{s^{'}_{n}}(z)\to x$.  
 
 As  $\mathcal{F}$  is dense, there exists $y\in  V_{k_{0}}\cap  \mathcal{F} $.   Without loss of generality we can assume that $\Phi_{s^{'}_{n}}(y)\to y_0$ (here we are using that $X$ locally compact). As $y\in \mathcal{F}$, then there existe $s_0$ such that $y_0= \Phi_{s_{0}}(y).$
 
 As $z\in Trans(X)^{*}$ there exists  $s_1$ such that $\Phi_{s_{1}}\Phi_{s_{0}}(z)\in B(w,r/8)$. Since $y,z\in V_{k_{0}}$ then:\\ 
 
 (1)  $d ( \Phi_{s_{1}} \Phi_{s_{0}}(z) , \Phi_{s_{1}} \Phi_{s_{0}} (y) )\leq  1/k{_{0}}< r/4$.\\

 (2) As $y_0= \Phi_{s_{0}}(y)$ then  $d (  \Phi_{s_{1}} \Phi_{s_{0}} (z),\Phi_{s_{1}} (y_0)  )\leq  1/k{_{0}}<r/4$.\\
 
 (3) $d ( \Phi_{s_{1}} (y_0)  , \Phi_{s_{1}} (x) )   = \lim_{n\to +\infty} d ( \Phi_{s_{1}} \Phi_{s^{'}_{n}}(y)  , \Phi_{s_{1}} \Phi_{s^{'}_{n}}(z) )   \leq  1/k{_{0}}<r/4$.\\
 
 (4)  From (2) and  (3,)  we obtain that $d ( \Phi_{s_{1}} (x)  , \Phi_{s_{1}} \Phi_{s_{0}} (z) )<r/2$.\\

 As $\Phi_{s_{1}}\Phi_{s_{0}}(z)\in B(w,r/8)$, from (4) we have that $\Phi_{s_{1}} (x)\in B(w,r)$.

$\Box$

\vspace{.5cm}

Let $M\subset X$ be a subset of $X$. We define $M^{-1}:=\cup_{s\in \mathcal{S}}\Phi^{-1}_{s}(M) \cup M$, where $\Phi^{-1}_{s}(M)=\{x\in X: \ \Phi_{s}(x)\in M \}$.

\begin{prop}\label{prop2}
 Let $X$ be a locally compact metric space with a countable base and $( \mathcal{S}, X, \Phi )$ that satisfy
the following two conditions:\\
(1) $Trans(X)^{*}\neq\emptyset$,\\
(2) There exists an invariant no dense set $M$ such that $M^{-1}$ is dense in $X$.\\
Then $( \mathcal{S}, X, \Phi )$  is sensitive.
\end{prop}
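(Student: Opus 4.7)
The plan is to follow the template of Theorems \ref{theo0} and \ref{theo1}: assume $(\mathcal{S}, X, \Phi)$ is not sensitive and use Remark \ref{rk1} to extract a small opene set on which the action is almost constant, then arrange a contradiction with the non-density of $M$. Concretely, because $M$ is not dense I would first pick $w \in X \setminus \overline M$, set $\delta = d(w, \overline M) > 0$, and choose $k_0 \in \N$ with $1/k_0 < \delta$. Remark \ref{rk1} then yields an opene set $V_{k_0}$ with $\mathrm{diam}(\Phi_s(V_{k_0})) \leq 1/k_0$ for every $s \in \mathcal S$.

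The next step is to populate $V_{k_0}$ with the right two points. On one hand, $Trans(X)^* \cap V_{k_0}$ is nonempty: starting from any $u \in Trans(X)^*$, the orbit $O(u)$ is dense, so some $\Phi_{s_1}(u)$ lies in $V_{k_0}$, and associativity of the action shows that $z := \Phi_{s_1}(u)$ still lies in $Trans(X)^*$ (for every $s \in \mathcal S$, $\Phi_s(z) = \Phi_{ss_1}(u) \in Trans(X)$). On the other hand, density of $M^{-1}$ gives $y \in M^{-1} \cap V_{k_0}$, and by the definition of $M^{-1}$ together with the invariance of $M$ there exists $s^* \in \mathcal S$ with $\Phi_{s^*}(y) \in M$ (if $y \in M$, any $s^*$ works since $\Phi_{s^*}(M) \subset M$).

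The decisive inequality is then immediate: for every $s \in \mathcal S$,
\[
d(\Phi_{ss^*}(z), \Phi_{ss^*}(y)) \leq \mathrm{diam}(\Phi_{ss^*}(V_{k_0})) \leq 1/k_0,
\]
while $\Phi_{ss^*}(y) = \Phi_s(\Phi_{s^*}(y)) \in \Phi_s(M) \subset M$ by invariance. Hence $d(\Phi_{ss^*}(z), M) \leq 1/k_0$ for every $s \in \mathcal S$. Since $z \in Trans(X)^*$ we have $\Phi_{s^*}(z) \in Trans(X)$, so $\overline{\{\Phi_{ss^*}(z) : s \in \mathcal S\}} = X$; taking limits in the previous inequality yields $d(p, \overline M) \leq 1/k_0$ for every $p \in X$. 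Applied to $p = w$ this gives $\delta = d(w, \overline M) \leq 1/k_0 < \delta$, the desired contradiction.

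The main obstacle I expect is just the careful verification that $Trans(X)^* \cap V_{k_0}$ is nonempty and the uniform handling of the two possibilities $y \in M$ or $y \in M^{-1} \setminus M$; both rely crucially on the forward invariance of $M$ (and of $Trans(X)^*$) under $\Phi$. Once these are in place, the proof reduces to the single diameter bound above, in the same spirit as in the proofs of Theorems \ref{theo0} and \ref{theo1}.
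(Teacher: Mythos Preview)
Your argument is correct, but the paper takes a more direct route that avoids both Remark~\ref{rk1} and any limiting step. Rather than arguing by contradiction, the paper fixes $w\in X$ and $r>0$ with $M\cap B(w,r)=\emptyset$ and verifies directly that $\alpha=r/4$ is a sensitivity constant: given any opene $U$, it picks $y\in M^{-1}\cap U$ with $\Phi_{s_0}(y)\in M$ and $z\in Trans(X)^*\cap U$, then uses $\Phi_{s_0}(z)\in Trans(X)$ to find $s_1$ with $\Phi_{s_1}\Phi_{s_0}(z)\in B(w,r/2)$; since $\Phi_{s_1}\Phi_{s_0}(y)\in M$ by invariance, the two images are more than $r/2$ apart, and the triangle inequality gives the sensitivity at $x$. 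Your proof instead transports the density argument for $Trans(X)^*$ (which you make explicit, while the paper leaves it implicit) into a single ``small'' set $V_{k_0}$ furnished by nonsensitivity, and then squeezes the whole of $X$ into the $1/k_0$-neighborhood of $\overline M$ via a limit along the dense orbit of $\Phi_{s^*}(z)$. Both approaches hinge on the same two ingredients---density of $Trans(X)^*$ and forward invariance of $M$---but the paper's direct argument is shorter and yields an explicit sensitivity constant, whereas your contradiction scheme stays closer to the template of Theorems~\ref{theo0} and~\ref{theo1}.
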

\begin{proof}
As $M$ is no dense, there exists $w\in X$ and $r>0$ such that $M\cap B(w,r)=\emptyset$. We will show that the dynamical system $( \mathcal{S}, X, \Phi )$ is sensitive with constant $\alpha =r/4$. Let $x\in X$ and $U$ a neighbourhood of $x$. 
As $M^{-1}$ is dense in $X$ there exists $y\in M^{-1}\cap U$. Then there exists $s_0\in \mathcal{S}$ that $\Phi_{s_{0}}(y)\in M$. Let $z\in Trans(X)^{*}\cap U $. As $z\in Trans(X)^{*}$ there exist $s_1 \in \mathcal{S}$ that $\Phi_{s_{1}}\Phi_{s_{0}}(z)\in B(w,r/2)$. As  $\Phi_{s_{1}}\Phi_{s_{0}}(y)\in M$, then $d(   \Phi_{s_{1}}\Phi_{s_{0}}(y)   ,  \Phi_{s_{1}}\Phi_{s_{0}}(z) )> r/2>\alpha $.

\end{proof}



{\bf{Proof of Theorem \ref{theo2}:}}\\
If $\mathcal{M}$ is dense, by Theorem \ref{theo0},   $( \mathcal{S}, X, \Phi )$  is sensitive.\\
If $\mathcal{M}$ is no dense, by Proposition \ref{prop2},   $( \mathcal{S}, X, \Phi )$  is sensitive.\\

$\Box$

\vspace{.5cm}
The following example shows that in Proposition~\ref{prop2}, the hypothesis \( Trans(X)^* \neq \emptyset \)  cannot be replaced by the assumption that the system is topologically transitive (TT).

\begin{figure}[h]
\psfrag{0}{$0$}
\psfrag{1}{$1$}
\psfrag{13}{$\frac{1}{3}$}
\psfrag{23}{$\frac{2}{3}$}
\psfrag{f}{$f_1\equiv 0$}
\psfrag{g}{$f_2$}
\psfrag{12}{$\frac{1}{2}$}
\psfrag{14}{$\frac{1}{4}$}
\psfrag{18}{$\frac{1}{8}$}
\psfrag{h}{$f_3$}
\begin{center}
\caption{\label{figura101}}
\subfigure[]{\includegraphics[scale=0.22]{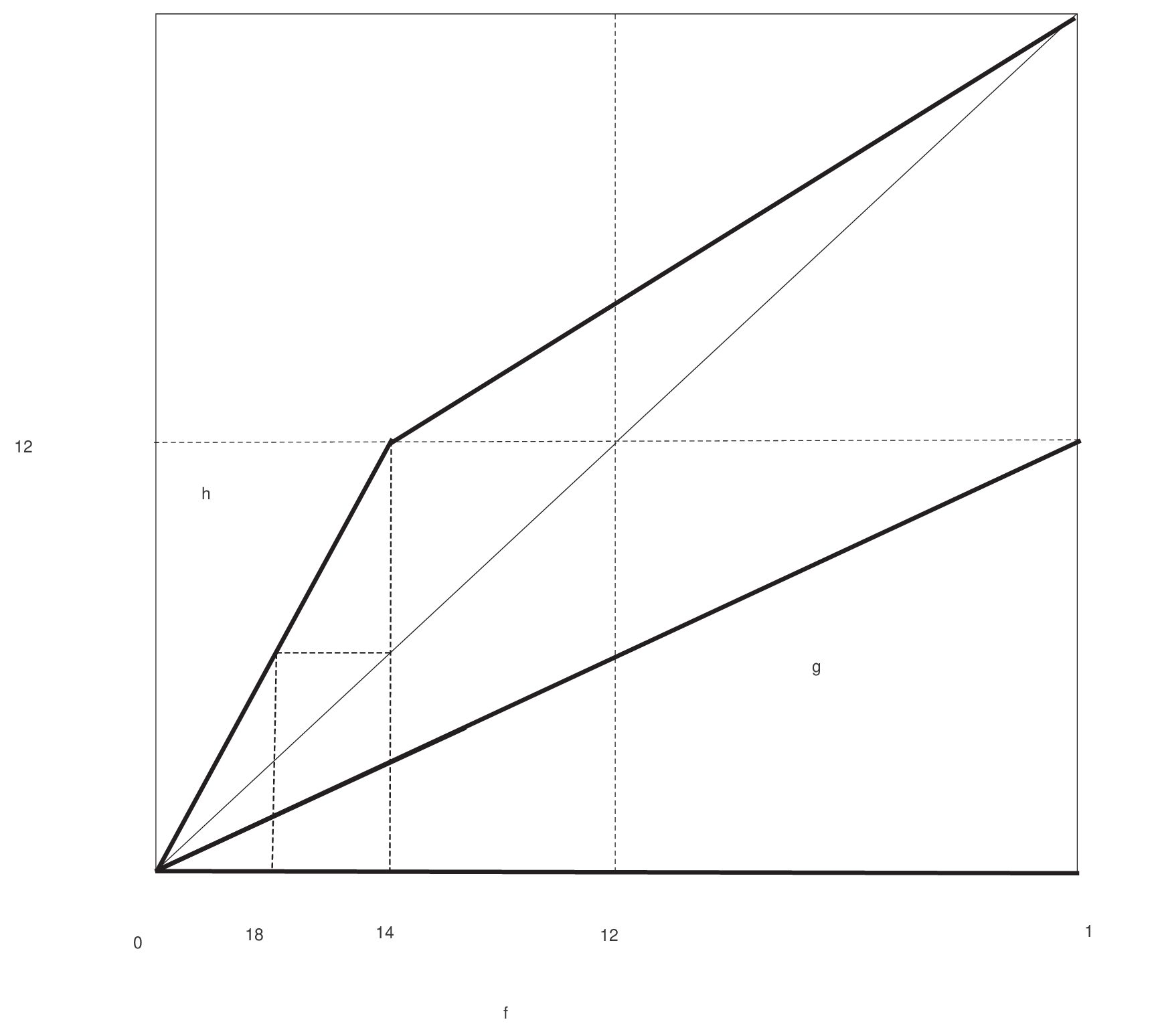}}
\end{center}
{ }
\end{figure}

\vspace{1cm}

Let $f_1,f_2,f_3:[0,1]\to [0,1]$ be as in Figure \ref{figura101} and  $\mathcal{S}$ the semigroup generated by $f_1,f_2 $ and $f_3$. The following was proved in \cite{ip1}:
\begin{itemize}
\item $\mathcal{M}=\{0\}$ and  $f_1^{-1}(0)=[0,1]=X$, therefore $\mathcal{M}^{-1}$ is dense.
\item $(\mathcal{S},X,\Phi)$ is TT and non-minimal. 
\item $(\mathcal{S},X,\Phi)$ is non-sensitive.
\end{itemize}

\end{document}